\begin{document}
\newtheorem{thm}{Theorem}[section]
\newtheorem{lem}[thm]{Lemma}
\newtheorem{prop}[thm]{Proposition}
\newtheorem{cor}[thm]{Corollary}
\theoremstyle{definition}
\newtheorem{ex}[thm]{Example}
\newtheorem{rem}[thm]{Remark}
\newtheorem{prob}[thm]{Problem}
\newtheorem{thmA}{Theorem}
\renewcommand{\thethmA}{}
\newtheorem{defi}[thm]{Definition}
\renewcommand{\thedefi}{}
\input amssym.def
\long\def\alert#1{\smallskip{\hskip\parindent\vrule%
\vbox{\advance\hsize-2\parindent\hrule\smallskip\parindent.4\parindent%
\narrower\noindent#1\smallskip\hrule}\vrule\hfill}\smallskip}
\def\ff{\frak}
\def\Spec{\mbox{\rm Spec}}
\def\type{\mbox{ type}}
\def\Hom{\mbox{ Hom}}
\def\rank{\mbox{ rank}}
\def\Ext{\mbox{ Ext}}
\def\Ker{\mbox{ Ker}}
\def\Max{\mbox{\rm Max}}
\def\End{\mbox{\rm End}}
\def\l{\langle\:}
\def\r{\:\rangle}
\def\Rad{\mbox{\rm Rad}}
\def\Zar{\mbox{\rm Zar}}
\def\Supp{\mbox{\rm Supp}}
\def\Rep{\mbox{\rm Rep}}
\def\cal{\mathcal}
\title[Rings whose ideals form a BL-algebra]{BL-rings}
\thanks{2010 Mathematics Subject Classification.
06D35, 06E15, 06D50}
\thanks{\today}
\author{Olivier A, Heubo-Kwegna, Celestin Lele, Jean B. Nganou}
\address{Department of Mathematical Sciences, Saginaw Valley StateUniversity,
Cameroon} \email{oheubokw@svsu.edu}
\address{Department of Mathematics and Computer Science, University of Dschang, Cameroon
} \email{celestinlele@yahoo.com}
\address{Department of Mathematics, University of Oregon, Eugene,
OR 97403} \email{nganou@uoregon.edu}
\begin{abstract} The main goal of this article is to introduce BL-rings, i.e., commutative rings whose lattices of ideals can be equipped with a structure of BL-algebra. We obtain a description of such rings, and study the connections between the new class and well known classes such as multiplications rings, Baer rings, Dedekind rings.
\vspace{0.10in}\\
{\noindent} Key words: Multiplication ring, Baer ring, subdirectly irreducible ring, BL-ring.
\end{abstract}
\maketitle
\section{Introduction}
Given any ring (commutative or not, with or without unity) $R$ generated by idempotents, the semiring of ideals of $R$ under the usual operations form a residuated lattice $A(R)$. In recent articles, several authors have investigated classes of rings for which the residuated lattice $A(R)$ an algebra of a well-known subvariety of residuated lattices. For instance, rings $R$ for which $A(R)$ is an MV-algebra, also called \L ukasiewicz rings are investigated in \cite{BN}, rings $R$ for which $A(R)$ is a G\"{o}del algebra, also called  G\"{o}del rings are investigated in \cite{BNM}, and very recently rings $R$ for which $A(R)$ is an pseudo MV-algebra, also called Generalized \L ukasiewicz rings are investigated in \cite{KLN}.

In the same spirit, the goal of the present article is we introduce and investigate the class of commutative rings $R$ for which $A(R)$ is a BL-algebra, also referred to as BL-rings. Among other things, we show that this class is properly contained in the class of multiplication rings as treated in \cite{Grif}, and contains properly each of the classes of Dedekind domains, \L ukasiewicz rings, discrete valuation rings, Noetherian multiplication rings. We also prove that BL-rings are closed under finite direct products, arbitrary direct sums, and homomorphic images. Furthermore, a description of subdirectly irreducible BL-rings is obtained, which combined with the well known Birkhoff theorem, provides a representation of general BL-rings.

We recall that a commutative integral residuated lattice can be defined as a nonempty set $L$ with four binary operations
 $\wedge, \vee, \otimes, \to$, and two constants $0,1$ satisfying: (i) $\mathbb{L}(L):=(L,\wedge, \vee, 0, 1)$ is a bounded lattice; (ii) $(L,\otimes, 1)$ is a commutative  monoid; and (iii) $(\otimes, \to)$ form an adjunct pair, i.e., $x\otimes y\leq z$ iff $x\leq y\to z$, for all $x, y, z\in L$.
 
An \textit{R$\ell$ monoid} is a residuated lattice  $L$ satisfying the divisibility axiom, that is: $ x\wedge y=(x \to y)\otimes x$, for all $x, y\in L$.

A \textit{BL- algebra} is an \textit{R$\ell$ monoid} $L$ satisfying the pre-linearity axiom, that is: $(x\to y)\vee (y\to x)=1$, for all $x, y\in L$

An \textit{MV-algebra} is a BL-algebra  $L$ satisfying the double-negation law: $x^{\ast \ast}=x$, for all $x\in L$, where $x^\ast=x\to 0$. \\

Given any commutative ring $R$ generated by idempotents (that is for every $x\in R$, there exists an idempotent $e\in R$ such that $ex=x$), the lattice of ideals of $R$ form a residuated lattice $A(R):=\langle \text{Id}(R), \wedge, \vee, \otimes, \to, \{0\}, R\rangle $, where $I\wedge J = I\cap J$,
$I\vee J=I+J$, $I \otimes J:=I\cdot J
$, $I \to J := \{x\in R: xI\subseteq J \}$.\\
Note that $I^\ast$ is simply the annihilator of $I$ in $R$.

The following notations will be used throughout the paper. Given a commutative ring $R$, recall that an ideal $I$ of $R$ is called an annihilator ideal (resp. a dense ideal) if $I=J^\ast$ for some ideal $J$ of $R$ (resp. $I^\ast=0$).

$A(R)$ denotes the residuated lattice of ideals of $R$;

$MV(R)$ denotes the set of annihilator ideals of $R$;

$D(R)$ denotes the set of dense ideals of $R$. 
\section{BL-rings, definitions, examples and first properties}
In this section, we introduce the notion of BL-rings. As announced, these should be rings $R$ for which $A(R)$ is naturally equipped with a BL-algebra structure. Some of the main properties of these rings, and their connections to other known classes of rings are established.
\begin{defi}
A commutative ring $R$ is called a BL-ring if for all ideals $I, J$ of $R$,\\
BLR-1: $I\cap J=I\cdot (I\to J)$, \\
BLR-2: $(I\to J)+(J\to I)=R$.
\end{defi}
Note that BLR-1 is equivalent to $I\cap J\subseteq I\cdot (I\to J)$ since the inclusion $I\cdot (I\to J) \subseteq I\cap J$ holds in any ring. In addition, BLR-2 is easily seen to be equivalent to each of the following conditions:
\begin{itemize}
\item[BLR-2.1:] $(I\cap J)\to K=(I\to K)+(J\to K)$ for all ideals $I, J, K$ of $R$.
\item[BLR-2.2:] $I\to (J+K)=(I\to J)+ (I\to K)$  for all ideals $I, J, K$ of $R$.
\end{itemize}
Recall \cite{Grif} that a commutative ring is called a multiplication ring if every ideals $I, J$ of $R$ such that $I\subseteq J$, there exists an ideal $K$ of $R$ such that $I=J\cdot K$.
\begin{prop}\label{rl=mu}
A commutative ring satisfies BLR-1 if and only if it is a multiplication ring.
\end{prop}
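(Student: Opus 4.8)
The plan is to prove the two implications separately, using throughout the observation already recorded in the excerpt that the inclusion $I\cdot(I\to J)\subseteq I\cap J$ holds in every commutative ring, so that BLR-1 reduces to the reverse containment $I\cap J\subseteq I\cdot(I\to J)$.

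For the forward direction, I would suppose $R$ satisfies BLR-1 and let $I\subseteq J$ be ideals of $R$. The idea is simply to apply BLR-1 to the pair $(J,I)$ in place of $(I,J)$, obtaining $J\cap I = J\cdot(J\to I)$. Since $I\subseteq J$ gives $J\cap I = I$, this reads $I = J\cdot(J\to I)$, so taking $K = J\to I$ exhibits $I$ as a multiple of $J$ and shows $R$ is a multiplication ring. This direction is essentially a direct substitution into the hypothesis.

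For the converse, I would suppose $R$ is a multiplication ring and fix ideals $I,J$. Set $L = I\cap J$; since $L\subseteq I$, the multiplication property yields an ideal $K$ with $L = I\cdot K$. The key step is to check that $K\subseteq I\to J$: because $I\cdot K = L = I\cap J\subseteq J$, every $x\in K$ satisfies $xI\subseteq I\cdot K\subseteq J$, which is exactly the condition $x\in I\to J$ by the definition $I\to J=\{x: xI\subseteq J\}$. Consequently $I\cap J = I\cdot K\subseteq I\cdot(I\to J)$, and combining with the always-valid reverse inclusion gives BLR-1.

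The argument is short, and the only real content is the residuation identification in the converse: recognizing that the factor $K$ produced by the multiplication hypothesis automatically lies in the residual $I\to J$. I do not anticipate a genuine obstacle here; the main thing to be careful about is invoking BLR-1 with the correct ordering of the two ideals in the forward direction (applying it to $(J,I)$, not $(I,J)$), since the identity $I\cap J = I\cdot(I\to J)$ is not symmetric in $I$ and $J$ as written.
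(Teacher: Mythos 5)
Your proof is correct and follows essentially the same route as the paper's: in the forward direction the residual supplies the factor $K$, and in the converse the factor $K$ produced by the multiplication hypothesis is shown to lie in $I\to J$. Your insistence on applying BLR-1 to the pair $(J,I)$ is in fact a point where you are more careful than the paper, whose text writes $I=I\cap J=I\cdot(I\to J)$ and takes $K=I\to J$ --- but since $I\subseteq J$ forces $I\to J=R$, that displays $I$ as $I\cdot R$ rather than as $J\cdot K$, so the roles of $I$ and $J$ there should indeed be swapped exactly as you do.
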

\begin{proof}
Suppose that $R$ is a BL-ring and let $I, J$ be ideals of $R$ such that $I\subseteq J$. Then by BLR-1, $I=I\cap J=I\cdot (I\to J)$. Take $K=I\to J$.\\
Conversely, suppose that $R$ is a multiplication ring and let $I, J$ be ideals of $R$. Since $I\cap J\subseteq I$, there exists an ideal $K$ of $R$ such that $I\cap J=I\cdot K$. Hence, $I\cdot K\subseteq J$ and it follows that $K\subseteq I\to J$. Thus, $I\cap J\subseteq I\cdot (I\to J)$. As observed above, the inclusion $I\cdot (I\to J) \subseteq I\cap J$ holds in any ring. Therefore, BLR-1 holds as needed.
\end{proof}
Recall that a ring $R$ is said to be generated by idempotents if $\; \; \text{for every}\; \;  x\in R,\; \text{there exist}\; \;  e=e^2\in R\; \;  \text{such that}\; \; xe=x$.
\begin{cor} \label{idempotent}
\begin{itemize}
\item[1.] Every BL-ring is generated by idempotents.
\item[2.] A commutative ring is a BL-ring if and only if $A(R)$ is a BL-algebra. 
\end{itemize}
\end{cor}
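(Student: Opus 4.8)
The plan is to handle the two assertions in the reverse of the stated order, since (2) is essentially a dictionary between the ring-theoretic conditions BLR-1, BLR-2 and the defining identities of a BL-algebra, while (1) is what makes the object $A(R)$ legitimate and carries the real content.

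For (2), I would start from the fact recorded in the introduction that for any ring generated by idempotents the structure $A(R)=\langle \mathrm{Id}(R),\cap,+,\cdot,\to,\{0\},R\rangle$ is a residuated lattice. Granting this, the BL-algebra axioms unwind directly under the identifications $\wedge=\cap$, $\vee=+$, $\otimes=\cdot$, $1=R$: the divisibility axiom $x\wedge y=(x\to y)\otimes x$ becomes, using commutativity of $\cdot$, exactly $I\cap J=I\cdot(I\to J)$, i.e.\ BLR-1; and the prelinearity axiom $(x\to y)\vee(y\to x)=1$ becomes $(I\to J)+(J\to I)=R$, i.e.\ BLR-2. Hence, once $A(R)$ is known to be a residuated lattice, it is an R$\ell$-monoid precisely when BLR-1 holds and a BL-algebra precisely when both BLR-1 and BLR-2 hold, that is, exactly when $R$ is a BL-ring. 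The only point needing care is that to even speak of $A(R)$ one needs $R$ generated by idempotents: in the forward direction this is supplied by (1), so $A(R)$ is defined and the translation applies; in the converse direction the hypothesis that $A(R)$ is a BL-algebra already presupposes that $A(R)$ is a residuated lattice, hence that $R$ is generated by idempotents, and the same translation returns BLR-1 and BLR-2.

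For (1), I would argue as follows. By Proposition \ref{rl=mu} a BL-ring satisfies BLR-1 and is therefore a multiplication ring. Fix $x\in R$ and set $I=(x)$. Since $I\to I=\{y\in R:yI\subseteq I\}=R$ for every ideal $I$, applying BLR-1 with $I=J=(x)$ gives $(x)=(x)\cap(x)=(x)\cdot\bigl((x)\to(x)\bigr)=(x)\cdot R$. A short computation identifies $(x)\cdot R$ with $Rx$, so $(x)=Rx$; in particular $x=rx$ for some $r\in R$. It then remains to upgrade this local witness $r$ to a genuine idempotent $e$ with $ex=x$.

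This last step is where I expect the real difficulty, and where the full multiplication-ring structure, rather than the single relation $x=rx$, must be used. The plan is to invoke the multiplication property once more: applying it to $(r^2)\subseteq(r)$ yields $c\in R$ with $r^2=cr$, whence $r(r-c)=0$ and, multiplying $x=rx$ through, $cx=x$ as well, so that any monomial in $r$ and $c$ fixes $x$; the goal is then to manufacture an idempotent fixing $x$ out of the relations among $r$ and $c$. Should this manipulation prove unwieldy, the alternative is to appeal to the structure theory of commutative multiplication rings in \cite{Grif}, where the existence of such idempotents is available. The main obstacle is precisely this passage from ``$x\in Rx$ for all $x$'' to an actual idempotent, since mere $s$-unitality of a commutative ring does not by itself produce idempotents; it is the stronger multiplication-ring hypothesis that must be leveraged to close the gap.
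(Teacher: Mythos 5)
Your proposal matches the paper's proof: part (2) is the same direct translation of BLR-1 and BLR-2 into the divisibility and prelinearity axioms once $A(R)$ is known to be a residuated lattice, and part (1) reduces, via Proposition \ref{rl=mu}, to the fact that every multiplication ring is generated by idempotents, which the paper simply cites as \cite[Cor.~7]{Grif}. Your attempted self-contained upgrade from $x=rx$ to an actual idempotent does not close (as you yourself note---$s$-unitality alone does not produce idempotents), but your stated fallback of invoking Griffin's result is exactly what the paper does, so the argument is complete provided you take that route rather than the elementary one.
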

\begin{proof}
1. BLR-1 implies that $R$ is a multiplication ring, and it is known that every multiplication ring satisfies the condition \cite[Cor. 7]{Grif}.\\
2. This is clear from (1) and the axioms BLR-1 and BLR-2.
\end{proof}
\begin{ex}\label{exam}
1. Discrete valuation rings (dvr).\\
 The ideals of a dvr are principal and totally ordered by the inclusion. Clearly BLR-2 holds in any chain ring. As for BLR-1, let $I, J$ be ideals of a dvr $R$. If $I\subseteq J$, then $I\to J=R$ and $I\cap J=I\cdot (I\to J)$. On the other hand, if $J\subseteq I$, since $I$ is principal, then $I=aR$ for some $a\in R$. Let $j\in J\subseteq aR$, then $j=ax$. So $ax\in J$ and $x\in I\to J$. Hence, $j\in I\cdot (I\to J)$ and $I\cap J\subseteq I\cdot (I\to J)$.\\
2. Noetherian multiplication rings. \\
By Proposition \ref{rl=mu}, every multiplication ring satisfies BLR-1. In addition, if $R$ is a Noetherian multiplication ring, then $R$ is a Noetherian arithmetical ring and by \cite[Thm. 3]{Jen}, $K\to (I+J)=(K\to I)+(K\to J)$ for all ideals $I, J, K$ of $R$. Hence, $R$ satisfies BLR-2.2, which is equivalent to BLR-2 as observed earlier. Whence, $R$ is a BL-ring as claimed. \\
3. \L ukasiewicz rings. Indeed, if $R$ is a \L ukasiewicz ring, then its ideals form an MV-algebra \cite{BN}.\\
4. G\"{o}del rings. Indeed, if $R$ is a G\"{o}del ring, its ideals from a a BL-algebra in which $I\cdot J=I\cap J$ \cite{BNM}.
\end{ex}
\begin{rem}\label{obs}
\begin{itemize}
\item[1.] Each of the classes of rings of Example \ref{exam} is a proper subclass of BL-rings. In fact $\mathbb{Z}$ is a Noetherian multiplication ring that is neither a dvr nor a \L ukasiewicz ring. In addition, $\oplus_{n=1}^\infty\mathbb{R}$ is a \L ukasiewicz ring that is neither Noetherian, nor a dvr.
\item[2.] A Noetherian ring is a BL-ring if and only if it is a multiplication ring. In addition, note that Noetherian multiplication rings are ZPI-rings (see for e.g., \cite[Exercise 10(b) pp. 224]{Lars}), and ZPI-rings are direct sum of a finite number of Dedekind domains and special primary rings. Therefore, Noetherian BL-rings are direct sum of a finite number of Dedekind domains and special primary rings.
\end{itemize}
\end{rem}
Recall that a Baer ring is a ring in which every annihilator ideal is generated by an idempotent, i.e., for every ideal $I$ of $R$, there exists an idempotent $e\in R$ such that $I^\ast=eR$. \par
The following lemma will be needed when working with BL-rings.
\begin{lem}\label{quotient}Let $R$ be a ring, and $I, J, K$ be
ideals of $R$ such that $I\subseteq J, K$. Then,\\
(a) $I\subseteq (I^\ast \cdot J)^\ast, J\to I, J\to K, K\to J$;\\
(b) $(J/I)^\ast=(J\to I)/I$;\\
(c) $(J/I)\to (K/I)=(J\to K)/I$.
 \end{lem}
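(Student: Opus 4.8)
The plan is to prove the three parts in the order listed, because part (a) is precisely what guarantees that the quotient ideals appearing in (b) and (c) are well-defined, and once (a) is in hand the remaining two identities reduce to unwinding the definition of the residuation $\to$ in $R$ against the one in $R/I$. Throughout I would use the explicit descriptions $J\to I=\{x\in R: xJ\subseteq I\}$ and $I^\ast=\{x\in R: xI=0\}$.

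For part (a) I would check each of the four inclusions directly by taking an arbitrary $x\in I$. Since $I$ is an ideal, $xJ\subseteq I$, which gives $x\in J\to I$; combining with $I\subseteq K$ yields $xJ\subseteq I\subseteq K$, so $x\in J\to K$; and symmetrically $xK\subseteq I\subseteq J$ gives $x\in K\to J$. For the remaining inclusion, note that every element of $I^\ast$ annihilates $x$ because $x\in I$, so $x(I^\ast\cdot J)=(xI^\ast)J=0$, whence $x\in(I^\ast\cdot J)^\ast$.

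For part (b), the expression $(J\to I)/I$ is legitimate exactly because $I\subseteq J\to I$ by (a). I would then compute the annihilator in $R/I$ directly: a coset $x+I$ annihilates $J/I$ iff $(x+I)(j+I)=xj+I=I$ for every $j\in J$, i.e.\ $xj\in I$ for all $j\in J$, which is the statement $x\in J\to I$. This gives $(J/I)^\ast=\{x+I: x\in J\to I\}=(J\to I)/I$. Part (c) is the same style of computation, again well-posed since $I\subseteq J\to K$ and $I\subseteq K$ by (a): the condition $x+I\in(J/I)\to(K/I)$ says $xj+I\in K/I$ for all $j\in J$, and because $I\subseteq K$ this is equivalent to $xj\in K$ for all $j\in J$, i.e.\ $x\in J\to K$, so $(J/I)\to(K/I)=(J\to K)/I$.

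I do not anticipate a genuine obstacle: the whole lemma is a formal unwinding of the definitions of $\to$, of $(-)^\ast$, and of the quotient ring. The only point that requires attention — and the reason (a) is proved first — is ensuring that $(J\to I)/I$ and $(J\to K)/I$ really are ideals of $R/I$, which amounts to the containments $I\subseteq J\to I$ and $I\subseteq J\to K$ supplied by (a); after that, (b) and (c) are purely mechanical.
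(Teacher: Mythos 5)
Your proof is correct and follows exactly the route the paper intends: the paper's own proof is the single sentence that these identities ``are easily derived from the definitions of the operations involved,'' and your argument is precisely that derivation, carried out in full and with the well-definedness of the quotient ideals properly justified via part (a).
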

\begin{proof} These are easily derived from the definitions of the operations involved.
\end{proof}
Note that if a ring satisfies BLR-2, then since $I\to J=I\to (I\cap J)$, it must satisfy the following.\\
BLR-3: $I\cap J=0$ implies $I^\ast +J^\ast=R$.
\begin{prop}\label{blr23}
A ring $R$ satisfies BLR-2 if and only if every quotient (by an ideal) of $R$ satisfies BLR-3.
\end{prop}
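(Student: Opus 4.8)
The plan is to prove both implications by translating between annihilators in a quotient ring and the residuation arrows $I\to J$ in $R$, using Lemma \ref{quotient}. The key elementary observation, already recorded just before the statement, is that $I\to J=I\to(I\cap J)$ for all ideals $I,J$ (indeed $xI\subseteq I$ always holds, so $xI\subseteq I\cap J$ iff $xI\subseteq J$). This lets us recognize $I\to J$ as an annihilator once we pass to the quotient $R/(I\cap J)$.

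For the forward direction, suppose $R$ satisfies BLR-2. I would first check that BLR-2 is inherited by every quotient $R/A$: for ideals $I,J$ containing $A$, Lemma \ref{quotient}(c) gives $(I/A)\to(J/A)=(I\to J)/A$, so that $\big[(I/A)\to(J/A)\big]+\big[(J/A)\to(I/A)\big]=\big[(I\to J)+(J\to I)\big]/A=R/A$. Since BLR-2 implies BLR-3 (as observed just before the statement), every quotient of $R$, itself satisfying BLR-2, must in particular satisfy BLR-3.

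For the converse, suppose every quotient of $R$ satisfies BLR-3, and let $I,J$ be arbitrary ideals of $R$. Set $A=I\cap J$ and pass to $R/A$. The images $I/A$ and $J/A$ satisfy $(I/A)\cap(J/A)=(I\cap J)/A=0$, so BLR-3 applied in $R/A$ yields $(I/A)^\ast+(J/A)^\ast=R/A$. By Lemma \ref{quotient}(b), $(I/A)^\ast=(I\to A)/A$ and $(J/A)^\ast=(J\to A)/A$, whence $(I\to A)+(J\to A)=R$. Finally, using $A=I\cap J$ together with the identity above, $I\to A=I\to(I\cap J)=I\to J$ and likewise $J\to A=J\to I$, so $(I\to J)+(J\to I)=R$, which is exactly BLR-2.

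The computations are routine once the right quotient is chosen; the only genuine point requiring care is the reverse direction, where one must select the quotient $R/(I\cap J)$ precisely so that the images of $I$ and $J$ meet in zero and BLR-3 becomes applicable, and then correctly identify the annihilators $(I/A)^\ast$ and $(J/A)^\ast$ via Lemma \ref{quotient}(b). I anticipate no deeper obstacle.
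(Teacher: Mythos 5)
Your proof is correct and follows essentially the same route as the paper: both directions hinge on Lemma \ref{quotient} together with the identity $I\to J=I\to(I\cap J)$, and your converse is the paper's argument verbatim in substance. The only cosmetic difference is in the forward direction, where you first show BLR-2 passes to quotients via Lemma \ref{quotient}(c) and then invoke the observation that BLR-2 implies BLR-3, whereas the paper inlines that observation into a direct computation of $(J/I)^\ast+(K/I)^\ast$; the ingredients are identical.
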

\begin{proof}
Suppose that $R$ satisfies BLR-2, and let $I$ be an ideal of $R$. Let $I\subseteq J, K$ such that $(J/I)\cap (K/I)=I$. Then, $J\cap K=I$. Now, $(J/I)^\ast+(K/I)^\ast=(J\to I)/I+(K\to I)/I=(J\to (J\cap K))/I+(K\to (J\cap K))/I=((J\to K)+(K\to J))/I=R/I$. Thus, $R/I$ satisfies BLR-3. Conversely, suppose that every factor of $R$ satisfies BLR-3. Let $I, J$ be ideals of $R$, then $R/(I\cap J)$ satisfies BLR-3. Since  $(I/(I\cap J))\cap (J/(I\cap J))=I\cap J$, then $(I/(I\cap J))^\ast+(J/(I\cap J))^\ast=R/(I\cap J)$. That is, $(I\to (I\cap J))/I+(J\to (I\cap J))/I=R/(I\cap J)$, or $(I\to J)/(I\cap J)+(J\to I)/(I\cap J)=R/(I\cap J)$. Thus, $((I\to J)+(J\to I))/(I\cap J)=R/(I\cap J)$ and it follows that $(I\to J)+(J\to I)=R$. So, $R$ satisfies BLR-2 as needed.
\end{proof}
\begin{cor}
Every multiplication ring satisfies BLR-2 if and only if every multiplication ring satisfies BLR-3.
\end{cor}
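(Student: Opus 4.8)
The plan is to reduce both directions to the equivalence already established in Proposition~\ref{blr23}, combined with the elementary observation (recorded immediately before that proposition) that BLR-2 implies BLR-3, and the standard fact that the class of multiplication rings is closed under passage to quotients.

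The forward implication is immediate: since any ring satisfying BLR-2 automatically satisfies BLR-3, if every multiplication ring satisfies BLR-2 then in particular every multiplication ring satisfies BLR-3.

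For the converse, I would assume that every multiplication ring satisfies BLR-3 and let $R$ be an arbitrary multiplication ring. By Proposition~\ref{blr23}, to conclude that $R$ satisfies BLR-2 it suffices to show that every quotient $R/I$ satisfies BLR-3. The key step is to check that each $R/I$ is again a multiplication ring. Given ideals $\bar{A}\subseteq \bar{B}$ of $R/I$, corresponding to ideals $I\subseteq A\subseteq B$ of $R$, the multiplication property of $R$ (Proposition~\ref{rl=mu}) supplies an ideal $K$ with $A=B\cdot K$, and passing to the quotient gives $A/I=(B/I)\cdot((K+I)/I)$; thus every pair of nested ideals of $R/I$ factors as required, and $R/I$ is a multiplication ring. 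Applying the hypothesis to the multiplication ring $R/I$ then yields that $R/I$ satisfies BLR-3 for every $I$, and Proposition~\ref{blr23} delivers BLR-2 for $R$.

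The only step carrying any content is the closure of multiplication rings under quotients, and this is a routine verification (also standard in the literature); so I do not expect a genuine obstacle. The corollary is, in essence, a formal consequence of Proposition~\ref{blr23} once this closure property is in hand.
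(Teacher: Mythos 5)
Your proposal is correct and follows exactly the intended route: the paper states this corollary without proof as an immediate consequence of Proposition~\ref{blr23}, together with the fact (recorded later as part 1 of Proposition~\ref{quo-m}) that quotients of multiplication rings are multiplication rings, which you verify by the same computation. Nothing is missing.
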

The following example, which is a special case of \cite[Ex. 4]{Grif} is a multiplication ring that does not satisfy BLR-2.
\begin{ex}\label{notbl}
Let $F$ be any field and let $R$ be the subring of $\prod_{k=1}^\infty F$ generated by $\oplus_{k=1}^\infty F$ and the constant functions from $\mathbb{N}\to F$.\\
Then $R$ is a multiplication ring with identity.\\
Note that $R$ is the subring of $\prod_{k=1}^\infty F$ of all sequences $\mathbb{N}\to F$ that are eventually constant. That is $f\in R$ if and only if there exists $x\in F$ and $n\geq 1$ such that $f(k)=x$ for all $k\geq n$.\\
Now, let $I=\{f\in R: f(2k)=0\; \text{for all}\; k\in \mathbb{N}\}$ and $J=\{f\in R: f(2k+1)=0\; \text{for all}\; k\in \mathbb{N}\}$.\\
Then $I, J\subseteq \oplus_{k=1}^\infty F$, $I\cap J=0$, $I^\ast=J$, $J^\ast=I$. Thus, $I^\ast +J^\ast=I+J\ne R$.
\end{ex}
\begin{prop}\label{NP}
Let $R$ be a ring that is generated by idempotents and $P$ be a prime ideal of $R$. Recall \cite[\S IX.4]{Lars} that $$N(P)=\{x\in R:xs=0\; \; \text{for some }\; \; s\in R\setminus P\}$$
Then $\displaystyle\bigcap_{P} N(P)=0$.
\end{prop}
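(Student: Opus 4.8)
The plan is to argue by contraposition. Unwinding the definition, $x\in N(P)$ holds precisely when the annihilator $\mathrm{Ann}(x)=\{s\in R: sx=0\}$ is \emph{not} contained in $P$, since $x\in N(P)$ asks for some $s\in\mathrm{Ann}(x)$ lying outside $P$. Consequently, $x\in\bigcap_P N(P)$ means that $\mathrm{Ann}(x)$ avoids containment in every prime ideal. The trivial inclusion $\{0\}\subseteq\bigcap_P N(P)$ is clear, so it suffices to show that for each $x\neq 0$ there exists at least one prime ideal $P$ of $R$ with $\mathrm{Ann}(x)\subseteq P$; that single prime then witnesses $x\notin N(P)$.

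The obstacle is that $R$ need not have a unit, so I cannot directly invoke the fact that a proper ideal is contained in a maximal ideal. This is exactly where the hypothesis that $R$ is generated by idempotents enters. Given $x\neq 0$, first choose an idempotent $e$ with $ex=x$ and pass to the subring $eR$, which is a commutative ring with unit $e$. I will use the surjective ring homomorphism $\pi:R\to eR$ defined by $\pi(r)=er$; one checks it is multiplicative, as $\pi(r)\pi(r')=(er)(er')=e^2rr'=err'=\pi(rr')$.

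Inside $eR$ I consider $\mathrm{Ann}_{eR}(x)=\{y\in eR: yx=0\}$. Since $ex=x\neq 0$, the unit $e$ does not lie in this ideal, so it is a proper ideal of $eR$ and therefore, by Zorn's lemma, is contained in some maximal ideal $M$ of $eR$. Setting $P:=\pi^{-1}(M)$, the ideal $P$ is prime as the preimage of a prime ideal under a ring homomorphism, and it is proper because $\pi(e)=e\notin M$. Finally I verify $\mathrm{Ann}(x)\subseteq P$: if $sx=0$, then $\pi(s)x=(es)x=e(sx)=0$, so $\pi(s)\in\mathrm{Ann}_{eR}(x)\subseteq M$, whence $s\in\pi^{-1}(M)=P$.

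Thus $\mathrm{Ann}(x)\subseteq P$ for a prime $P$, giving $x\notin N(P)$ and hence $x\notin\bigcap_P N(P)$; contrapositively, every element of $\bigcap_P N(P)$ is zero, as claimed. The one genuinely delicate point is the absence of a unit in $R$; once the reduction to the unital ring $eR$ via the idempotent $e$ is in place, the remaining verifications (that $\pi$ is a homomorphism and that annihilators pull back correctly) are routine.
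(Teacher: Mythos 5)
Your proof is correct, and its skeleton is the same as the paper's: for $x\neq 0$ you produce a prime ideal $P$ containing the annihilator of $x$, and observe that this forces $x\notin N(P)$ (the paper phrases the annihilator as $(xR)^\ast$, which coincides with your $\mathrm{Ann}(x)$ once an idempotent $e$ with $ex=x$ is available). The one point where you genuinely diverge is the step the paper glosses over entirely: it simply asserts that the proper ideal $(xR)^\ast$ is contained in some prime ideal of $R$, which is not automatic in a ring without unity. You justify this by passing to the unital corner ring $eR$ via the homomorphism $r\mapsto er$, finding a maximal (hence prime) ideal of $eR$ above $\mathrm{Ann}_{eR}(x)$, and pulling it back; the paper instead (much later, in its final proposition, item (ii)) runs a direct Zorn's lemma argument inside $R$ on ideals containing a given proper ideal but avoiding the idempotent $e$. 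Both devices use the ``generated by idempotents'' hypothesis in the same essential way; yours has the merit of reducing to the familiar unital theory, and your writeup is the more complete of the two since the paper's proof of this proposition never actually cites the fact it needs.
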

\begin{proof}
Let $x\ne 0$, then $(xR)^\ast\ne R$. Thus, there exists a prime ideal $P$ of $R$ such that $(xR)^\ast \subseteq P$. We claim that $x\notin N(P)$. Indeed, if $x\in N(P)$, then $xs=0$ for some $s\notin P$. This would imply that $s\in (xR)^\ast \subseteq P$, which is a contradiction.
\end{proof}
\begin{prop}
Every multiplication ring with unity is a subring of a direct product of dvrs and SPIRs.
\end{prop}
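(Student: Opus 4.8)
The plan is to realize $R$ as a subring of the direct product of all its localizations at prime ideals, and then invoke the local structure theory of multiplication rings to identify the factors. First I would note that a ring with unity is automatically generated by idempotents (take $e=1$ in the definition), so Proposition \ref{NP} applies and gives $\bigcap_P N(P)=0$, the intersection ranging over all prime ideals $P$ of $R$.

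Next I would identify $N(P)$ with the kernel of the canonical localization map $\iota_P\colon R\to R_P$, where $R_P=S^{-1}R$ with $S=R\setminus P$: directly from the definitions, $\iota_P(x)=0$ if and only if $sx=0$ for some $s\notin P$, i.e.\ if and only if $x\in N(P)$. Consequently the product homomorphism $\iota\colon R\to\prod_P R_P$, $\iota(x)=(\iota_P(x))_P$, has kernel $\bigcap_P\ker\iota_P=\bigcap_P N(P)=0$. Since each $\iota_P$ is a unital ring homomorphism, $\iota$ is an injective unital ring homomorphism, and hence $R$ is isomorphic to a subring of $\prod_P R_P$.

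It remains to control the factors. Each $R_P$ is a localization of the multiplication ring $R$, hence again a multiplication ring (if $A\subseteq B$ are ideals of $R_P$ they are extensions of their contractions $A_0\subseteq B_0$ in $R$, and $A_0=B_0C$ in $R$ yields $A=B\cdot CR_P$ in $R_P$), and it is local with maximal ideal $PR_P$. Here I would appeal to the classification of local multiplication rings in \cite{Lars}: a local multiplication ring with identity is a field, a discrete valuation ring, or a special primary ring. Regarding a field as the degenerate SPIR whose maximal ideal is $0$, every factor $R_P$ is a DVR or a SPIR, so $R$ embeds in a direct product of DVRs and SPIRs, as claimed.

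The main obstacle is this final step. Everything up to the embedding $R\hookrightarrow\prod_P R_P$ is formal and uses only Proposition \ref{NP}, but identifying each local multiplication ring $R_P$ as a DVR or a SPIR is the substantive point, and it is precisely where the local structure theory of multiplication rings from \cite{Lars} is needed. For a self-contained treatment this step would have to be expanded: one would recall that a multiplication ring which is a domain is a Dedekind domain, so that a local such ring is a field or a DVR, while a local multiplication ring with nonzero nilpotents has a principal nilpotent maximal ideal and is therefore a SPIR.
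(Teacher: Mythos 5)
Your proof is correct and follows essentially the same route as the paper: both embed $R$ into $\prod_P R_P$ via the diagonal localization map $x\mapsto (\tfrac{x}{1})_P$, whose kernel is $\bigcap_P N(P)=0$ by Proposition \ref{NP}. The only divergence is in identifying the factors: the paper first notes that $R$ is an AM-ring \cite[Lemma 2.4]{BP} and then cites \cite[Thm. 9.23, Prop. 9.25, Prop. 9.26]{Lars}, whereas you argue that each $R_P$ is itself a local multiplication ring and appeal to the classification of those; both are legitimate appeals to the same local structure theory in \cite{Lars}.
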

\begin{proof} Let $R$ be multiplication ring with unity.
Consider $\varphi :R\to \prod_PR_P$ defined by $\varphi(x)=(\frac{x}{1})_P$.
Then $\varphi$ is a ring homomorphism and ker$\varphi=\bigcap_{P} N(P)$. Thus, $\varphi$ is injective by Proposition \ref{NP}. On the other hand, $R$ is an AM-ring \cite[Lemma 2.4]{BP}. Therefore, by \cite [Thm. 9.23, Prop. 9.25, Prop. 9.26]{Lars}, each $R_P$ is either a dvr or an SPIR.
\end{proof}
\begin{prop}\label{closed}BL-rings are closed under each of the following operations.\\
1. Finite direct products;\\
2. Arbitrary direct sums;\\ 
3. Homomorphic images.
\end{prop}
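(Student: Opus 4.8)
The plan is to reduce all three closure properties to the componentwise behaviour of the residuated operations, treating finite products as the special case of direct sums over a finite index set.

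For homomorphic images, write the image as $R/I$. I would first observe that BLR-1 passes to $R/I$: by Proposition \ref{rl=mu} it suffices to see that a homomorphic image of a multiplication ring is again a multiplication ring, and this is immediate, since ideals $\bar J \subseteq \bar K$ of $R/I$ lift to ideals $I \subseteq J \subseteq K$ of $R$, and a factorization $J = K\cdot L$ (available because $J\subseteq K$) projects to $\bar J = \bar K \cdot \bar L$. For BLR-2 I would invoke Proposition \ref{blr23}: every quotient of $R/I$ is, by the third isomorphism theorem, a quotient of $R$, and since $R$ satisfies BLR-2 all of its quotients satisfy BLR-3; hence all quotients of $R/I$ satisfy BLR-3, and Proposition \ref{blr23} applied to $R/I$ returns BLR-2.

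For arbitrary direct sums, the key step is to describe the ideals of $R = \bigoplus_{i} R_i$. I would prove that every ideal $L$ of $R$ equals $\bigoplus_i L_i$, where $L_i$ is the ideal of $R_i$ consisting of the $i$-th components of the elements of $L$. The inclusion $L \subseteq \bigoplus_i L_i$ is clear; for the reverse, given a finitely supported $y = (y_i)$ with each $y_i \in L_i$, for every index $i$ in the support I would choose $x \in L$ with $x_i = y_i$ together with an idempotent $f_i \in R_i$ (available since each BL-ring is generated by idempotents, Corollary \ref{idempotent}) satisfying $f_i y_i = y_i$; multiplying $x$ by the element of $R$ equal to $f_i$ in slot $i$ and $0$ elsewhere produces an element of $L$ whose only nonzero entry is $y_i$ in slot $i$, and summing over the finite support yields $y \in L$. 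This extraction of a single slot is the step I expect to be the main obstacle, as it is precisely where the idempotent-generation hypothesis is used and where the finite-support bookkeeping must be handled with care (the same argument also shows $R$ itself is generated by idempotents, so that $A(R)$ is defined).

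With this decomposition in place, I would check that all four operations act slotwise, namely $(\bigoplus_i I_i)\wedge(\bigoplus_i J_i)=\bigoplus_i(I_i\cap J_i)$, and likewise for $+$ and $\cdot$, with the crucial identity $(\bigoplus_i I_i)\to(\bigoplus_i J_i)=\bigoplus_i(I_i\to J_i)$ following from the observation that $y\cdot(\bigoplus_i I_i)\subseteq\bigoplus_i J_i$ holds if and only if $y_i I_i\subseteq J_i$ for every $i$. Thus $A(R)$ is isomorphic, as a residuated lattice, to the direct product $\prod_i A(R_i)$. Since each $A(R_i)$ is a BL-algebra (Corollary \ref{idempotent}) and the class of BL-algebras is closed under direct products, $A(R)$ is a BL-algebra; equivalently, BLR-1 and BLR-2 for $R$ follow by applying the corresponding identities of each $A(R_i)$ slotwise. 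By Corollary \ref{idempotent} this makes $R$ a BL-ring. Finally, for a finite index set $\bigoplus$ and $\prod$ coincide, so this argument also settles the case of finite direct products, completing all three parts.
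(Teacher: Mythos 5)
Your proof is correct, and parts of it diverge from the paper's in a worthwhile way. For direct sums and products you follow essentially the paper's strategy -- decompose every ideal of the (co)product componentwise using the idempotent-generation of each factor, check that $\cap$, $+$, $\cdot$, $\to$ all act slotwise, and transfer BLR-1 and BLR-2 from the factors -- though you supply the extraction-of-a-single-slot argument in full where the paper merely asserts the decomposition, and you fold the finite-product case into the direct-sum case rather than treating it first. The genuine difference is in the homomorphic-image part: the paper proves BLR-1 for $R/I$ by a direct computation $(J/I)\cap(K/I)=(J\cap K)/I=(J\cdot(J\to K))/I=(J/I)\cdot((J/I)\to(K/I))$ using the quotient identities of Lemma \ref{quotient}, and leaves BLR-2 as ``similar''; you instead route BLR-1 through the multiplication-ring characterization of Proposition \ref{rl=mu} (in effect re-proving Proposition \ref{quo-m}(1), which the paper states only later) and BLR-2 through Proposition \ref{blr23} combined with the third isomorphism theorem. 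Your route has the advantage of making the BLR-2 step completely explicit and of reusing already-established characterizations rather than redoing quotient arithmetic; the paper's route is more self-contained at this point in the text since it relies only on Lemma \ref{quotient}. Both arguments are sound.
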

\begin{proof}
1. Let $R=\prod_{k=1}^nR_k$, where each $R_k$ is a BL-ring. Using the fact that each $R_k$ is generated by idempotents, one gets that any ideal $I$ of $R$ is of the form $I=\prod_{k=1}^nI_k$, where $I_k$ is an ideal of  $R_k$ for all $k$. \\
On the other hand, if $I=\prod_{k=1}^nI_k$ and $J=\prod_{k=1}^nJ_k$, one can easily verify the following identities: 
$I\cdot J=\prod_{k=1}^nI_k\cdot J_k$, $I\to J=\prod_{k=1}^nI_k\to J_k$, $I\cap J=\prod_{k=1}^nI_k\cap J_k$, and $I+J=\prod_{k=1}^nI_k+J_k$.\\
From these identities, it becomes clear that $R$ satisfies BLR-1 and BLR-2 since each $R_k$ does. Therefore, $R$ is a BL-ring.\\
2. This is very similar to the above. Indeed, one proves that ideals of direct sums of BL-rings are direct sums of ideals and the argument goes through as in (1), with each instance of the finite direct product replaced by a direct sum.\\
3. Let $R$ be a BL-ring and $I$ be an ideal of $R$. We shall show that $R/I$ is a BL-ring. Recall that ideals of $R/I$ are of the form $J/I$, where $J$ is an ideal of $R$ with $I\subseteq J$. Let $J, K$ be ideals of $R$ containing $I$. Using the properties stated in Proposition \ref{quotient}, we have $(J/I)\cap (K/I)=(J\cap K)/I=(J\cdot (J\to K))/I=(J/I)\cdot (J\to K)/I=(J/I)\cdot ((J/I)\to (K/I))$. Thus, $R/I$ satisfies BLR-1. The verification of BLR-2 is similar. Therefore, $R/I$ is a BL-ring as needed.
\end{proof}
The following examples shows that an arbitrary direct product of BL-rings needs not a BL-ring.
\begin{ex}

\end{ex}
\section{Connection with Baer rings and Von Neumann ring}
\begin{prop}\label{Baer}
A reduced ring with identity satisfies BLR-3 if and only if it is a Baer ring.
\end{prop}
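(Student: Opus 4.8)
The plan is to prove both implications from two standard annihilator facts valid in any commutative ring with identity, together with the reduced hypothesis. First I would record the preliminaries. For every ideal $I$ one has the triple-annihilator identity $I^{\ast\ast\ast}=I^\ast$ (from $I\subseteq I^{\ast\ast}$ applied twice and the order-reversing nature of $\ast$), so every annihilator ideal $A=I^\ast$ satisfies $A^{\ast\ast}=A$. For an idempotent $e$ one computes directly $(eR)^\ast=(1-e)R$, since $xeR=0$ iff $xe=0$ iff $x=x(1-e)$. The reduced hypothesis enters only through the observation that $A\cap A^\ast=0$ for any annihilator ideal $A$: an element $x$ in this intersection annihilates itself, so $x^2=0$, whence $x=0$.

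For the direction \emph{BLR-3 implies Baer}, I would take an arbitrary annihilator ideal $A=I^\ast$ and apply BLR-3 to the pair $A,A^\ast$. Since $A\cap A^\ast=0$, BLR-3 yields $A^\ast+A^{\ast\ast}=R$, and the triple-annihilator identity gives $A^{\ast\ast}=A$, so $A^\ast+A=R$. Writing $1=a+b$ with $a\in A^\ast$ and $b\in A$, the key computation is that $ab=0$ (because $a$ annihilates $A\ni b$); multiplying $1=a+b$ by $b$ then gives $b=b^2$, so $b$ is idempotent. Finally $bR\subseteq A$ because $b\in A$, while for $x\in A$ one has $x=x(a+b)=xa+xb=xb\in bR$, so $A=bR$. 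Hence every annihilator ideal is generated by an idempotent and $R$ is Baer.

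For the converse \emph{Baer implies BLR-3}, I would start from $I\cap J=0$ and aim at $1\in I^\ast+J^\ast$. Since $IJ\subseteq I\cap J=0$, we get $J\subseteq I^\ast$, hence $I^{\ast\ast}\subseteq J^\ast$. Because $I^{\ast\ast}$ is an annihilator ideal and $R$ is Baer, $I^{\ast\ast}=eR$ for some idempotent $e$, so $e\in I^{\ast\ast}\subseteq J^\ast$. On the other hand $I^\ast=(I^{\ast\ast})^\ast=(eR)^\ast=(1-e)R$ by the preliminary computation, so $1-e\in I^\ast$. Therefore $1=(1-e)+e\in I^\ast+J^\ast$, giving $I^\ast+J^\ast=R$.

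The proof is short, and the real content is the extraction of the idempotent: the only place where anything could go wrong is verifying that the element $b$ coming from the decomposition $1=a+b$ is genuinely idempotent and genuinely generates $A$. That step hinges on the defining property $A^\ast\cdot A=0$, which makes $ab=0$ automatic, so I do not expect a serious obstacle. The reduced hypothesis is used only to force $A\cap A^\ast=0$, and it is needed only for the forward implication; the Baer-to-BLR-3 direction is in fact valid in any commutative ring with identity.
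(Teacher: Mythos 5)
Your proof is correct and follows essentially the same route as the paper: reducedness forces $A\cap A^\ast=0$, BLR-3 then splits $1=a+b$ across complementary annihilators, and $ab=0$ makes the pieces idempotent; the converse likewise splits $1=(1-e)+e$ using the Baer idempotent. Your converse is in fact slightly cleaner than the paper's, since you place $e$ in $J^\ast$ via $e\in I^{\ast\ast}\subseteq J^\ast$ with the Baer property applied to $I^{\ast\ast}$, whereas the paper's assertion that $e\in I^{\ast\ast}$ (with $e$ the generator of $J^\ast$) is not justified as written.
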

\begin{proof}
Suppose $R$ is a reduced ring with identity that satisfies BLR-3. Let $I$ be an ideal of $R$, then since $R$ is reduced, $I\cap I^\ast =0$. It follows from BLR-3 that $I^\ast+I^{\ast\ast}=R$. Hence, $1=a+b$ for some $a\in I^\ast$ and $b\in I^{\ast\ast}$. Thus, $a=a.1=a(a+b)=a^2+ab=a^2$ and $a$ is idempotent. Now, for every $x\in I^\ast$, $x=x.1=x(a+b)=xa+xb=xa$. Therefore, $I^\ast=aR$ and $R$ is a Baer ring.\par
Conversely, suppose that $R$ is a Baer ring and let $I, J$ be ideals such that $I\cap J=0$. Then $I\subseteq J^\ast=eR$, for some idempotent $e\in R$. Note that since $I\subseteq eR$ and $e$ is idempotent, then $1-e\in I^\ast$. Thus, $1=(1-e)+e\in I^\ast+I^{\ast\ast}$ and $I^\ast+I^{\ast\ast}=R$.
\end{proof}
Recall that in every Baer ring $R$, $a^\ast$ is the unique idempotent element in $R$ such that $(aR)^\ast=a^\ast R$. An ideal $I$ of a Baer ring is called a Baer-ideal if for every $a, b\in R$ such that $a-b\in I$, then $a^\ast-b^\ast\in I$. 
\begin{cor}
For every Baer ring $R$ and every Baer-ideals $I, J$ of $R$, $(I\to J)+(J\to I)=R$.
\end{cor}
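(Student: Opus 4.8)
The plan is to reduce the statement to a single instance of BLR-3 in the quotient ring $R/(I\cap J)$ and then invoke Proposition \ref{Baer}. Put $L=I\cap J$. Since $I\to L=I\to(I\cap J)=I\to J$ and likewise $J\to L=J\to I$, Lemma \ref{quotient}(b) gives $(I/L)^\ast=(I\to J)/L$ and $(J/L)^\ast=(J\to I)/L$, while $(I/L)\cap(J/L)=0$ in $R/L$. Hence, exactly as in the proof of Proposition \ref{blr23}, the desired identity $(I\to J)+(J\to I)=R$ is equivalent to the instance of BLR-3 asserting $(I/L)^\ast+(J/L)^\ast=R/L$. So it suffices to establish BLR-3 for the images of $I$ and $J$ in $R/L$.

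The route I would take to get BLR-3 there is to show that $R/L$ is again a Baer ring and then quote Proposition \ref{Baer}, since a Baer ring is reduced with identity and satisfies BLR-3. This needs two things. First, that $L=I\cap J$ is itself a Baer-ideal: if $a-b\in L$ then $a-b$ lies in both $I$ and $J$, so $a^\ast-b^\ast$ lies in both (each being a Baer-ideal), hence in $L$; this is immediate. Second, and this is the heart of the matter, that the quotient of a Baer ring by a Baer-ideal is a Baer ring. The Baer-ideal axiom is tailored for this: it says $a\equiv b\pmod L$ forces $a^\ast\equiv b^\ast\pmod L$, so $\overline a\mapsto\overline{a^\ast}$ is a well-defined map on $R/L$ sending each class to an idempotent. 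One first checks $R/L$ is reduced (if $x^2\in L$ then, $R$ being reduced, $x^\ast=(x^2)^\ast$, and the Baer-ideal condition applied to $x^2\in L$ yields $1-x^\ast\in L$, whence $x=x(1-x^\ast)\in L$), and then tries to show every annihilator ideal of $R/L$ is generated by one of these descended idempotents.

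The main obstacle is precisely this last point. For a single class $\overline a$ one can prove $(\overline a\,R/L)^\ast=\overline{a^\ast}\,(R/L)$: the inclusion $\supseteq$ is clear, and for $\subseteq$ one starts from $ya\in L$, sets $z=y(1-a^\ast)$, checks $za^\ast=0$ and $za=ya\in L$, deduces $(za)^\ast=z^\ast$, and then uses the Baer-ideal property on $za\in L$ to get $1-z^\ast\in L$, so $z\in L$. This only delivers that \emph{principal} annihilators in $R/L$ are idempotent-generated, i.e. a Rickart-type condition. Passing from this to an arbitrary annihilator ideal, $(K/L)^\ast=\bigcap_{a\in K}\overline{a^\ast}\,(R/L)$, requires the infinite meet of the idempotents $\overline{a^\ast}$ to exist and to generate this intersection, which amounts to asking that the Boolean algebra of idempotents of $R$ survives completely under the quotient map modulo $L$. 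This is exactly where the full force of the Baer-ideal hypothesis on $I$ and $J$ must be used, and I expect it to be genuinely delicate: understanding how suprema of idempotents of $R$ are preserved (or destroyed) in $R/L$ is the crux on which the whole corollary rests, and the step most likely to require real care.
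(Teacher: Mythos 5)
Your reduction is exactly the paper's: pass to $L=I\cap J$, check that $L$ is a Baer-ideal, use Lemma \ref{quotient}(b) to see that the desired identity is the instance of BLR-3 for $I/L$ and $J/L$ in $R/L$, and conclude from Proposition \ref{Baer} once you know $R/L$ is a (reduced, unital) Baer ring. The one step you could not finish --- that the quotient of a Baer ring by a Baer-ideal is again a Baer ring --- is precisely the step the paper does not prove either: it is Proposition \ref{quo-m}(2), quoted from Speed and Evans \cite[Lemma 3]{speed1}. So, read as a self-contained argument, your proposal has a genuine gap, but it sits exactly where the paper inserts a citation, and everything you do prove along the way is correct: that $I\cap J$ is a Baer-ideal, that $R/L$ is reduced, and that principal annihilators in $R/L$ are generated by the descended idempotents $\overline{a^\ast}$.

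Your diagnosis of why the remaining step is not free is also accurate. Your element computation only yields the Rickart property of $R/L$, whereas the converse direction of Proposition \ref{Baer} is applied to the annihilator of the whole ideal $J/L$, so you really do need $(K/L)^\ast=\bigcap_{a\in K}\overline{a^\ast}\,(R/L)$ to be generated by a single idempotent; that is the full Baer condition, and it hinges on the completeness of the Boolean algebra of idempotents surviving the quotient by a Baer-ideal. That is exactly what Speed and Evans establish, and the economical fix is simply to invoke Proposition \ref{quo-m}(2) (equivalently \cite[Lemma 3]{speed1}) at that point, as the paper does, rather than reconstructing their argument.
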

\begin{proof}
Let $R$ be a Baer ring and $I, J$ be Baer-ideals of $R$, then $I\cap J$ is a Baer-ideal. Hence, $R/(I\cap J)$ is a Baer ring and by Proposition \ref{Baer}, satisfies BLR-3. It follows as in the proof of Proposition \ref{blr23}, that $(I\to J)+(J\to I)=R$.
\end{proof}
\begin{prop}\label{quo-m}
1. Every quotient (by an ideal) of a multiplication ring is a multiplication ring.\\
2. Every quotient (by an Baer-ideal) of a Baer ring is a multiplication Baer ring.
\end{prop}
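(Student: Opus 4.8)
\emph{Plan for (1).} I would argue directly from the description of ideals of a quotient. The ideals of $R/I$ are precisely the $J/I$ with $I\subseteq J$, so suppose $J/I\subseteq K/I$, i.e. $J\subseteq K$. Since $R$ is a multiplication ring (equivalently, satisfies BLR-1 by Proposition \ref{rl=mu}), I would choose an ideal $L$ with $J=K\cdot L$ and set $\overline{L}=(L+I)/I$. Then $(K/I)\cdot\overline{L}=(K(L+I)+I)/I=(KL+KI+I)/I$, and since $KI\subseteq I\subseteq KL=J$ this equals $J/I$. Hence every ideal of $R/I$ below $K/I$ is a multiple of $K/I$, so $R/I$ is a multiplication ring. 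This step is routine.

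\emph{Plan for (2).} Here the multiplication half is immediate from (1) (so the working hypothesis is that the ambient ring is a multiplication Baer ring), and the content is that $R/I$ stays Baer. First I would record that a commutative Baer ring is reduced: if $a^2=0$ then $a\in (aR)^\ast=a^\ast R$ while $a^\ast a=0$, so $a=0$. To produce idempotents in $R/I$, the point is that the Baer-ideal condition makes the assignment $a\mapsto a^\ast$ descend to $R/I$. Concretely, applying the Baer-ideal condition to the pair $(xa,0)$ and using $0^\ast=1$ gives $1-(xa)^\ast\in I$; combined with the reduced-Baer identity $(xaR)^\ast=(xR)^\ast+(aR)^\ast$, this yields the implication $xa\in I\Rightarrow (1-a^\ast)x\in I$. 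Taking $x=a$ shows $a^2\in I\Rightarrow a=(1-a^\ast)a\in I$, so $R/I$ is reduced; and for a principal ideal $\overline{a}\,(R/I)=(aR+I)/I$ the same implication gives $((aR+I)\to I)=a^\ast R+I$, whence by Lemma \ref{quotient}(b) the annihilator $(\overline{a}\,(R/I))^\ast=\overline{a^\ast}\,(R/I)$ is generated by the idempotent $\overline{a^\ast}$. Thus principal annihilators in $R/I$ are idempotent-generated.

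\emph{The main obstacle.} Since $R/I$ is reduced with identity, by Proposition \ref{Baer} it suffices to verify BLR-3 in $R/I$; unwinding through Lemma \ref{quotient} this is the assertion that $J\cap K=I$ implies $(J\to I)+(K\to I)=R$. This passage from the principal case to arbitrary ideals is where I expect the real difficulty: the annihilator of a general ideal $L/I$ is $\bigl(\bigcap_{a\in L}(a^\ast R+I)\bigr)/I$, so idempotent-generation now requires controlling the supremum of the family $\{a^\ast:a\in L\}$ of idempotents, whereas the Baer-ideal hypothesis only governs one element at a time. This infinitary step cannot be finessed for free: for instance $\prod_{n}F$ is a multiplication Baer ring and $\oplus_{n}F$ is a Baer-ideal, yet $\prod_{n}F/\oplus_{n}F$ is reduced with idempotents forming the incomplete Boolean algebra $\mathcal{P}(\mathbb{N})/\mathrm{fin}$, hence is not Baer. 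Consequently the multiplication (arithmetical) structure of $R$ must be used precisely here, presumably to reduce the two-ideal condition $J\cap K=I$ to finitely generated data, where only finite joins of the idempotents $a^\ast$ occur and the argument closes. Making that reduction rigorous is the crux; granting it, Proposition \ref{Baer} finishes the Baer claim and, with (1), shows $R/I$ is a multiplication Baer ring.
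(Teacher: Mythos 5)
Your part (1) is exactly the paper's argument (the paper writes $J/I=(K\cdot T)/I=(K/I)\cdot(T/I)$ without replacing $T$ by $T+I$; your version is just the more careful form of the same computation). For part (2) the paper offers no argument at all: it simply cites Lemma 3 of Speed--Evans, so there is nothing internal to compare your attempted proof against.

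That said, your analysis of (2) is more decisive than you realize, and you draw the wrong moral from it. The obstacle you isolate is not a step that the multiplication hypothesis can be brought in to repair, because your own example already satisfies that hypothesis: $R=\prod_{k=1}^\infty F$ is von Neumann regular, hence a multiplication ring by Proposition \ref{vnr}, and it is Baer in this paper's sense (the annihilator of any ideal is $e_BR$, $B$ being the common zero set); $I=\oplus_{k=1}^\infty F$ is a Baer-ideal; yet $R/I$ fails the paper's Baer condition. For the last point one need not invoke completeness of Boolean algebras: partition $\mathbb{N}$ into infinitely many infinite sets $A_k$, let $L$ be the ideal of $R/I$ generated by the classes $\bar e_{A_k}$; then $L^\ast$ consists of the classes of those $x$ with $\mathrm{supp}(x)\cap A_k$ finite for all $k$, and if $L^\ast=\bar e_B(R/I)$ then choosing one point of $A_k\setminus B$ for each $k$ produces an element of $L^\ast$ not in $\bar e_B(R/I)$. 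So under the definition of Baer ring adopted in this paper (annihilators of \emph{arbitrary ideals} are idempotent-generated), statement (2) is false, and no refinement of your finiteness reduction will close the gap. The resolution is a definitional mismatch with the cited source: in Speed--Evans, as in much of the commutative Baer-ring literature, a Baer ring is one in which the annihilator of each \emph{element} is generated by an idempotent, and for that notion your ``principal case'' computation --- $(aR+I)\to I=a^\ast R+I$, hence $(\bar a(R/I))^\ast=\overline{a^\ast}(R/I)$ --- already is the whole proof. Your reading of the hypothesis as ``multiplication Baer ring'' is likewise forced: otherwise $I=0$ (trivially a Baer-ideal) would make every Baer ring a multiplication ring, which fails already for the Baer domain $\mathbb{Z}[x]$.
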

\begin{proof}
1. Let $R$ be a multiplication ring and $I$ an ideal of $R$. Let $J/I\subseteq K/I$ be ideals of $R/I$, then $J\subseteq K$. Thus, as $R$ is a multiplication ring, there exists an ideal $T$ of $R$ such that $J=K\cdot T$. Hence, $J/I=(K\cdot T)/I=K/I\cdot T/I$. Therefore, $R/I$ is a multiplication ring.\\
2. \cite[Lemma 3]{speed1}
\end{proof}

Recall that a Commutative ring with is a Von Neumann ring (VNR) if and only if $R_P$ is a field for all prime ideals of $R$.\\
\begin{prop} \label{vnr}
Every VNR is a multiplication ring.
\end{prop}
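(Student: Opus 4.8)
The plan is to reduce the multiplication property to a pointwise regularity statement and then finish with a short element chase. Given ideals $I\subseteq J$, it suffices to exhibit an ideal $K$ with $I=JK$, and I claim that $K=I$ always works. Granting this, the whole proposition follows once I know that $R$ is von Neumann regular at the level of elements, i.e. that every $a\in R$ lies in $a^2R$. So the real content is extracting this element-wise regularity from the hypothesis that each localization $R_P$ is a field.

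To establish that, I would fix $a\in R$ and consider the $R$-module $M=aR/a^2R$. Since a module is zero exactly when all of its localizations at primes vanish, it is enough to check $M_P=0$ for every prime $P$. Localization is exact, so $M_P=aR_P/a^2R_P$; in the field $R_P$ the element $a/1$ is either $0$ or a unit, and in both cases $aR_P=a^2R_P$, whence $M_P=0$. Therefore $M=0$, that is $aR=a^2R$, and in particular $a=a^2b$ for some $b\in R$.

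With this regularity in hand the verification is immediate. Given $I\subseteq J$, the inclusion $JI\subseteq I$ is automatic because $J\subseteq R$. For the reverse inclusion, take $a\in I$ and write $a=a^2b=(ab)\,a$; since $a\in I\subseteq J$ we have $ab\in J$, while $a\in I$, so $a=(ab)a\in J\cdot I$. Hence $I=JI$, and taking $K=I$ shows that $R$ is a multiplication ring. Equivalently, one could invoke Proposition \ref{rl=mu} and verify BLR-1 by the very same element computation.

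The only genuine obstacle is the passage from the local hypothesis ``each $R_P$ is a field'' to the global element-level statement $a\in a^2R$; this is precisely where the standard local-to-global principle for the vanishing of modules does the work. Once that step is secured, the remainder is a one-line manipulation, so I expect the write-up to be short, with essentially all of the care concentrated in the localization argument.
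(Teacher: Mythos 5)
Your argument is correct, but it is a genuinely different route from the paper's. The paper checks BLR-1 (equivalent to the multiplication property by Proposition \ref{rl=mu}) \emph{locally}: it uses that an equality of ideals can be verified after localizing at every prime, and then runs a case analysis on $I_P,J_P\in\{0,R_P\}$; the only delicate point there is computing $(I\to J)_P$ when $I_P=R_P$ and $J_P=0$, which requires an explicit element chase with denominators. You instead extract the \emph{global, element-wise} form of regularity, $a\in a^2R$ for all $a$, from the hypothesis that each $R_P$ is a field, via the standard local-to-global principle applied to $M=aR/a^2R$; this step is sound (with unity assumed, as the paper does). From there your observation that $K=I$ always works --- equivalently, every ideal of a von Neumann regular ring is idempotent, so $I=I^2\subseteq JI\subseteq I$ whenever $I\subseteq J$ --- finishes the proof without ever touching the residual $I\to J$. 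Your version is arguably cleaner and yields the stronger explicit conclusion that the witnessing ideal can be taken to be $I$ itself. What the paper's computation buys in exchange is the precise description of $(I\to J)_P$ in each case, which is reused immediately after the proposition to characterize when a VNR is actually a BL-ring (namely, that $I_P=J_P=0$ must force $(I\to J)_P=R_P$); your proof does not produce that byproduct.
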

\begin{proof}
Note that by Proposition \ref{rl=mu}, we simply have to prove BLR-1, which we shall do locally. Let $R$ be a VNR, and $I, J$ be ideals of $R$, and $P$ a prime ideal of $R$. We need to show that $I_P\cap J_P=I_P\cdot (I\to J)_P$. Since $R_P$ is a field, then $I_P=0$ or $R_P$ and $J_P=0$ or $R_P$. The equation is obvious when $I_P=0$. We consider the remaining two cases. \\
Case 1: If $I_P=J_P=R_P$, then $J\cap (R\setminus P)\ne \emptyset$. But, $J\subseteq I\to J$, so $J\cap (R\setminus P)\subseteq (I\to J)\cap (R\setminus P)$. Hence $(I\to J)\cap (R\setminus P)\ne \emptyset$ and $(I\to J)_P=R_P$. Thus $I_P\cdot (I\to J)_P=R_P\cdot (I\to J)_P=(I\to J)_P=R_P=I_P\cap J_P$.\\
Case 2: Suppose $I_P=R_P$ and $J_P=0$, then $I\cap (R\setminus P)\ne \emptyset$. We need to show that $(I\to J)_P=0$. Since $I\cap (R\setminus P)\ne \emptyset$, there exists $s\in I$ and $s\notin P$. Now, let $x\in (I\to J)$ and $t\notin P$. Then, $xs\in J$ and since $J_P=0$, it follows that $xs/t=0/t$. Thus, $xst'=0$ for some $t'\notin P$, which implies that $x/t=0/t$. Whence, $(I\to J)_P=0$ as needed.
\end{proof}
It follows from Proposition \ref{vnr} and its proof that a VNR is a BL-ring if and only if it satisfies for all ideals $I, J$ and all prime ideal $P$, $$I_P=J_P=0\; \; \text{implies}\; \; (I\to J)_P=R_P$$

\section{representation and further properties of BL-rings}
Recall that it follows from the most celebrated Birkhoff subdirectly irreducible representation Theorem (see for e.g., \cite[Thm.8.6]{BS}), every BL-ring $R$ is a subdirect product of subdirectly irreducible rings, all of whom are homomorphic images of $R$. It follows from Proposition \ref{closed} that every BL-ring $R$ is a subdirect product of subdirectly irreducible BL-rings. This justifies the need to start our analysis with subdirectly irreducible BL-rings. It is known that for every subdirectly irreducible commutative ring $R$ with minimal ideal $M$ is either $R$ is a field or $M^2=0$ (see for e.g., \cite{Fe}).
\begin{prop}\label{subirr}
Let $R$ be a subdirectly irreducible BL-ring with minimal ideal $M$. Then
\begin{itemize}
\item[1.] The annihilator ideals of $R$ are linearly ordered and finite in number;
\item[2.] Every ideal of $R$ is either an annihilator ideal or dense;
\item[3.] $M$ is an annihilator ideal;
\item[4.] For every annihilator ideal $I\ne R$ and every dense ideal $J$, $I\subseteq J$ and $J\to I=I$;
\item[5.] For every ideals $I, J$ of $R$, either $I\to J$ or $J\to I$ is dense.
\end{itemize}
\end{prop}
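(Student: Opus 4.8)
The plan is to treat the degenerate case first and then reduce everything to two structural facts. If $R$ is a field then $A(R)=\{0,R\}$ and all five assertions are immediate, so I assume henceforth the remaining case $M^2=0$. I first record that $R$ has an identity: being subdirectly irreducible, its only idempotents are $0$ and $1$ (an idempotent $e$ splits $R=eR\oplus\mathrm{Ann}(e)$ into ideals with zero intersection, so one summand vanishes), and since $R$ is generated by idempotents this forces $1\in R$. Next I extract the engine. Since $M$ is the unique minimal ideal it is the whole socle, and for $0\ne a\in M$ minimality gives $M=aR\cong R/\mathrm{Ann}(a)$ with $M$ a simple module, so $M^\ast=\mathrm{Ann}(a)$ is a \emph{maximal} ideal. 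The key dictionary is then: $I$ is non-dense $\iff I^\ast\ne0\iff M\subseteq I^\ast$ (every nonzero ideal contains the monolith) $\iff MI=0\iff I\subseteq M^\ast$; equivalently \emph{$I$ is dense iff $I\not\subseteq M^\ast$}, i.e. $I+M^\ast=R$. With this, part (3) is short: $M^2=0$ gives $M\subseteq M^\ast$, and $M^{\ast\ast}=\mathrm{Ann}(M^\ast)$ is annihilated by the maximal ideal $M^\ast$, hence is a semisimple (an $R/M^\ast$-vector-space) submodule of $R$ and so lies in the socle $M$; together with $M\subseteq M^{\ast\ast}$ this yields $M=M^{\ast\ast}$.

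For the linear ordering in (1) I would pass to the MV-centre $MV(R)=\{I:I=I^{\ast\ast}\}$, which is an MV-algebra and in which $M$ (now an annihilator) is the least nonzero element, being the least nonzero ideal of $R$. In any MV-algebra two incomparable elements $x,y$ yield nonzero disjoint differences $u=x\ominus y$, $v=y\ominus x$ with $u\wedge v=0$; since $M\le u$ and $M\le v$ this forces $M\le u\wedge v=0$, a contradiction, so $MV(R)$ is a chain. For finiteness I use that $R$ is a multiplication ring, so each localization is a dvr or an SPIR; because $M_{M^\ast}\ne0$ while $M^2=0$, the ring $R_{M^\ast}$ has nonzero nilpotents and is therefore an \emph{SPIR}, with only finitely many ideals. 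It then suffices to see that $I\mapsto I_{M^\ast}$ is injective on the annihilator chain: if $I\subsetneq J$ are annihilators, choose $x\in J\setminus I$; as $I=I^{\ast\ast}$, $x\notin I$ gives $w\in I^\ast$ with $xw\ne0$, so $xwR\supseteq M$ and hence $xw/1\ne0$ in $R_{M^\ast}$, whereas $x/1\in I_{M^\ast}$ would produce $s\notin M^\ast$ with $sx\in I$, so $sxw\in II^\ast=0$, contradicting $xw/1\ne0$. Thus the annihilators embed into the finite ideal set of $R_{M^\ast}$.

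The identity $J\to I=I$ in (4) needs no further hypotheses: if $xJ\subseteq I$ then $xJI^\ast\subseteq II^\ast=0$, so $xI^\ast\subseteq J^\ast=0$ (as $J$ is dense), whence $x\in I^{\ast\ast}=I$, and the reverse inclusion $I\subseteq J\to I$ is automatic. Part (5) is then immediate from the dictionary and BLR-2: since $(I\to J)+(J\to I)=R$, the two cannot both lie in $M^\ast$, and whichever is not contained in $M^\ast$ is dense.

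The remaining claims — the inclusion $I\subseteq J$ of (4) and the dichotomy (2) — are where the real work lies, and the crux is to prove that \textbf{$R$ is local with maximal ideal $M^\ast$}, i.e. that the only dense ideal is $R$. Granting this, the inclusion in (4) is trivial (the sole dense $J$ is $R$), and (2) reduces to showing every proper ideal is an annihilator, which I would obtain from the multiplication-ring identity $I=M^\ast(M^\ast\to I)$ for $I\subseteq M^\ast$, iterated to realize every proper ideal as a double annihilator of a power of $M^\ast$ in the SPIR-like local structure. To prove locality I argue by contradiction from a maximal ideal $\mathfrak n\ne M^\ast$: such an $\mathfrak n$ is automatically dense, and choosing $s\in M^\ast\setminus\mathfrak n$ (so $s\ne0$, $sa=0$) the BLR-3 alternative applied to $sR$ and $M$ forces $M\subseteq sR$, after which I would play the resulting divisibility $a\in\bigcap_k s^kR$ against the finite annihilator chain at $M^\ast$. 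This locality step, and with it the control of possibly \emph{non-finitely-generated} ideals in (2), is the main obstacle: parts (1), (3), the identity in (4), and (5) use only the monolith and BLR-2, whereas eliminating extra maximal ideals genuinely requires marrying the multiplication-ring local analysis to subdirect irreducibility.
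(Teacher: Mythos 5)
Your handling of (1), (3), (5) and the identity $J\to I=I$ in (4) is essentially sound, though it follows a more ring-theoretic route than the paper: you obtain finiteness of the annihilator chain by localizing at $M^\ast$ and invoking the dvr/SPIR classification of localizations of multiplication rings, whereas the paper works entirely inside the BL-algebra $A(R)$, showing that $MV(R)$ is a complete MV-chain whose top element has a predecessor, hence a finite \L ukasiewicz chain. The genuine gap --- which you flag yourself --- is that statement (2) and the inclusion $I\subseteq J$ in (4) are never actually proved. Your plan (show $R$ is local with maximal ideal $M^\ast$, then realize every proper ideal as a double annihilator by iterating $I=M^\ast(M^\ast\to I)$) remains in the conditional mood throughout, and it is not clear it can be carried out: nothing in the proposition forces $R$ to be local, and part (4) is phrased precisely to accommodate proper dense ideals $J\ne R$, which your locality claim would rule out. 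Even granting locality, the dichotomy ``annihilator or dense'' for arbitrary, possibly non-finitely-generated ideals does not follow from the iteration you sketch.

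The missing idea is the ordinal-sum decomposition. From the fact that the MV-center $MV(R)$ of $A(R)$ is a finite \L ukasiewicz chain, the paper invokes \cite[Remark 3.3.2]{BM} to conclude $A(R)\cong MV(R)\oplus D(R)$, the ordinal sum of the MV-center with the hoop of dense elements; parts (2) and (4) are then immediate from the definition of the order and of $\to$ in an ordinal sum, with no case analysis on maximal ideals and no locality hypothesis. (The paper also gets (5) structurally, from $A(R)/D(R)\cong MV(R)$ being linearly ordered, though your BLR-2 argument for (5) is fine.) If you wish to keep your elementary approach, you must supply an independent proof that every non-dense ideal equals its double annihilator and that every proper annihilator ideal lies below every dense ideal; that is exactly the content of the ordinal-sum structure and is where the real work of the proposition lies.
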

\begin{proof}
Note that $A(R)$ is a BL-algebra and $MV(R)$ is an MV-algebra, more precisely the MV-center of $A(R)$. Moreover, $(\bigvee I)^\ast=\bigcap I^\ast$, which implies that every subset of $MV(R)$ has an infimum. It follows from this that every subset of $MV(R)$ also has a supremum since $\bigwedge S=(\bigvee S^\ast)^\ast$ \cite[Lemma 6.6.3]{C2}. Thus, $MV(R)$ is a complete MV-algebra. In addition, for every nonzero ideal $I$ of $R$, $M\subseteq I$, so $I^\ast \subseteq M^\ast$. Therefore, for every proper ideal $J$ in  $MV(R)$, we have $J\subseteq M^\ast$. This means that $MV(R)\setminus \{R\}$ has a maximum element, namely $M^\ast$ (note that $M\ast\ne R$ since $M\ne 0$). To see that $(MV(R), \subseteq)$ is a chain, let $X, Y\in MV(R)$ such that $X\nsubseteq Y$ and $Y\nsubseteq X$, then $X\to Y, Y\to X\ne R$ and $X\to Y, Y\to X\subseteq M^\ast$. Hence, by the pre-linearity axiom, $R=(X\to Y)\vee(Y\to X)\subseteq M^\ast$. Hence, $M^\ast =R$, which is a contradiction. Therefore, $(MV(R), \subseteq)$ is an MV-chain as claimed. But the only complete MV-chains are finite \L ukasiewicz chains and $[0,1]$. The condition $MV(R)\setminus \{R\}$ has a maximum element implies that $MV(R)$ is a finite \L ukasiewicz chain. This completes the proof of (1) and (3).

On the other hand, since the MV-center of $A(R)$ is a finite \L ukasiewicz chain, it follows from \cite[Remark 3.3.2]{BM} that $A(R)\cong MV(R)\oplus D(R)$, the ordinal sum of $MV(R)$ and $D(R)$. Readers unfamiliar with the ordinal sums of hoops may \cite[\S 3.1]{BM} for basic definitions and properties. The property stated in (2) clearly holds in $MV(R)\oplus D(R)$, and therefore in $A(R)$. Similarly, (4) follows from the definitions of the implication $\to$ in the ordinal sum.

It remains to prove (5). Since $A(R)$ is a BL-algebra, it is known (see for e.g., \cite[p. 368]{BM}) that the set $D(L)$ of dense elements of any BL-algebra $L$ is an implicative filter and $L/D(R)\cong MV(L)$. Therefore, $A(R)/D(R)\cong MV(R)$, and since $MV(R)$ is linearly ordered, we deduce that $A(R)/D(R)$ is linearly ordered. The conclusion now follows from the definitions of order and $\to$ on $A(R)/D(R)$. 

\end{proof}
The following result, which is the analog of \cite[Theorem 3.9]{BNM} shields light on the structure of a general BL-ring.
\begin{thm}(A Representation Theorem for BL-rings)
Every BL-ring $R$ is a subdirect product of a family $\{R_t:t\in T\}$ of subdirectly irreducible BL-rings satisfying:
\begin{itemize}
\item[1.] $A(R_t)\cong MV(R_t)\oplus D(R_t)$ for all $t\in T$;
\item[2.] $A(R)$ is a subdirect product of $\{A(R_t):t\in T\}$;
\item[3.] $A(R_t)$ is a BL-algebra with a unique atom.
\end{itemize}
\end{thm}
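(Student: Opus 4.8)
The plan is to build the representation on two pillars: Birkhoff's subdirect decomposition applied to the ring $R$, and the ordinal-sum analysis of subdirectly irreducible BL-rings already carried out in Proposition \ref{subirr}. First I would invoke Birkhoff's theorem for $R$ to write $R$ as a subdirect product of subdirectly irreducible homomorphic images $R_t=R/K_t$; by Proposition \ref{closed}(3) each $R_t$ is again a BL-ring, and being subdirectly irreducible, Proposition \ref{subirr} applies verbatim to each $R_t$. To secure item (2) I would not take an arbitrary subdirect representation but let $T$ index \emph{all} ideals $K_t$ for which $R/K_t$ is subdirectly irreducible, so that the family is as rich as Birkhoff's construction allows.

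With this setup, items (1) and (3) are essentially in hand. Item (1) is exactly the ordinal-sum decomposition $A(R_t)\cong MV(R_t)\oplus D(R_t)$ extracted in the proof of Proposition \ref{subirr}. For item (3), recall from Proposition \ref{subirr} that $MV(R_t)$ is a finite \L ukasiewicz chain whose least nonzero element is the minimal ideal $M$ of $R_t$. In an ordinal sum the atoms are precisely the atoms of the bottom summand, so the atoms of $A(R_t)$ coincide with those of the chain $MV(R_t)$; a chain has at most one atom, namely its least nonzero element $M$, whence $A(R_t)$ has a unique atom.

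The substance is item (2). Here I would consider the map $\Psi\colon A(R)\to\prod_t A(R_t)$ given by $\Psi(I)=\bigl((I+K_t)/K_t\bigr)_t$, induced by the quotient maps $\pi_t\colon R\to R_t$, and show it is a subdirect embedding. Preservation of $+$ and of the product $\cdot$ is routine; preservation of $\cap$ follows because multiplication rings, hence BL-rings by Proposition \ref{rl=mu}, are arithmetical, so the ideal lattice is distributive and $(I\cap J)+K_t=(I+K_t)\cap(J+K_t)$. For the residual I would combine Lemma \ref{quotient}(c), which gives $\bigl((I+K_t)/K_t\bigr)\to\bigl((J+K_t)/K_t\bigr)=\bigl((I+K_t)\to(J+K_t)\bigr)/K_t$, with the identity BLR-2.2, which rewrites $(I+K_t)\to(J+K_t)=I\to(J+K_t)=(I\to J)+(I\to K_t)$. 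Surjectivity of each projection is immediate from surjectivity of $\pi_t$, and injectivity reduces to the single identity $\bigcap_t(I+K_t)=I$ for every ideal $I$; I would prove this by passing to $R/I$ and applying Birkhoff's intersection identity to the subdirectly irreducible quotients of $R/I$, which are exactly the $R/K_t$ with $K_t\supseteq I$.

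The step I expect to be the main obstacle is showing that $\Psi$ respects the residual $\to$, i.e.\ that $\Psi$ is a genuine BL-algebra embedding and not merely a lattice embedding. The difficulty is that $\to$ carries annihilator-type information that does not transport cleanly across quotients, so the computation above yields $(I\to J)+(I\to K_t)$ rather than $(I\to J)+K_t$; reconciling these, by exploiting the ordinal-sum structure of each $A(R_t)$ from item (1) together with the prelinearity axiom BLR-2, is where the real work lies. Once this compatibility is settled, the three displayed properties follow as above and, combined with the injectivity identity, yield the asserted subdirect representation.
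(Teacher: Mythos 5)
Your overall architecture is the same as the paper's: Birkhoff's theorem applied to the ring $R$, Proposition \ref{closed}(3) and Proposition \ref{subirr} for items (1) and (3), and the canonical map $\Psi(I)=((I+K_t)/K_t)_t$ for item (2). Items (1) and (3) are handled correctly, and you have put your finger on exactly the right spot in item (2): whether $\Psi$ preserves $\to$. But your proposal does not close that step --- it ends with ``once this compatibility is settled'' --- and the identity you would need, namely $(I\to J)+K_t=(I+K_t)\to(J+K_t)$, is false in general. Take $R=\mathbb{Z}$ (a Noetherian multiplication ring, hence a BL-ring by Example \ref{exam}), $I=2\mathbb{Z}$, $J=8\mathbb{Z}$, $K_t=4\mathbb{Z}$; note that $4\mathbb{Z}$ is a legitimate Birkhoff kernel, being maximal among ideals excluding $2$, with $\mathbb{Z}/4\mathbb{Z}$ subdirectly irreducible. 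Then $(I\to J)+K_t=4\mathbb{Z}+4\mathbb{Z}=4\mathbb{Z}$, while $(I+K_t)\to(J+K_t)=2\mathbb{Z}\to 4\mathbb{Z}=2\mathbb{Z}$. Since both sides are specific ideals of $R$ containing $K_t$, no appeal to prelinearity or to the ordinal-sum structure of $A(R_t)$ can reconcile their images in $R/K_t$: the $t$-th coordinate of $\Psi$ simply fails to be a BL-algebra homomorphism.

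Nor is this an artifact of a bad choice of kernels, so your device of indexing over \emph{all} subdirectly irreducible quotients does not help. For $R=\mathbb{Z}$ every subdirectly irreducible quotient is some $\mathbb{Z}/p^k\mathbb{Z}$, whose ideal lattice is a finite MV-chain; a subdirect product of MV-algebras is again an MV-algebra, whereas $A(\mathbb{Z})$ is not one (in a domain every nonzero ideal is dense, so $I^{\ast\ast}=\mathbb{Z}\neq I$ for proper nonzero $I$). Hence no subdirect decomposition of the ring $\mathbb{Z}$ can induce a subdirect decomposition of the BL-algebra $A(\mathbb{Z})$ into the $A(\mathbb{Z}/p^k\mathbb{Z})$. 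To be fair, the paper's own proof dismisses this point with ``it is readily verified,'' so your instinct that this is where the real work lies is sound; but as written your argument, like the paper's, has a genuine gap at precisely that point, and the computation above shows it cannot be filled along the route you propose.
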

\begin{proof}
In light of the opening remarks of this section, more importantly Proposition \ref{subirr} and its proof, we only need to prove (2). To show that $A(R)$ is a subdirect product of $\{A(R_t):t\in T\}$, recall that there is a family $\{I_t:t\in T\}$  ideals of $R$ such that $R_t=R/I_t$ for all $t$ and $\bigcap I_t=0$. Now, define $\Theta: A(R)\to \prod_{t\in T}A(R/I_t)$ by $\Theta(I)=I+I_t \mod I_t$. It is readily verified that $\Theta$ is a subdirect embedding of BL-algebras.
\end{proof}
We shall end our study by establishing some (further) properties of BL-rings.

\begin{prop} Let $R$ be a BL-ring. Then
\begin{enumerate}
\item[(i)] Let $I$ be an ideal of $R$ and $P$ a prime ideal of $R$. Then $I\subseteq P$ or $I\rightarrow P=P$. 
\item[(ii)] Every proper ideal is contained in a prime ideal of $R$.
\item[(iii)] If $P,Q\subseteq R$ are prime ideals that are not comparable, then they are comaximal, that is, $P+Q=R$.
\item[(iv)] If $P,Q\subseteq R$ are distinct minimal prime ideals, then they are comaximal.
\item[(v)] Let $I$ be an ideal of $R$. Then the prime ideals below $I$, if any, form a chain.
\item[(vi)] Suppose $R$ is a local ring. Then the prime ideals of $R$ form a chain and each ideal is a power of a prime ideal.
\item[(vii)] Suppose $P,Q\subseteq R$ are prime ideals that are not comparable. Then there is a BL-epimorphism from $A(R)\rightarrow A(R/p\oplus R/Q)$. 
\item[(viii)] Suppose the set of minimal primes Min($R$) is finite, then $R$ is a finite direct sum of Dedekind domains and special primary rings.
\end{enumerate}
\end{prop}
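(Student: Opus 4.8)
I will use repeatedly that $R$ being a BL-ring means it is a multiplication ring (Proposition \ref{rl=mu}) satisfying BLR-2, and that BL-rings are closed under homomorphic images and finite direct products (Proposition \ref{closed}). Part (i) is purely ring-theoretic: $P\subseteq I\to P$ always holds, and if $I\not\subseteq P$ I pick $a\in I\setminus P$; then for any $x\in I\to P$ we have $xa\in xI\subseteq P$, so primeness of $P$ gives $x\in P$, whence $I\to P\subseteq P$ and equality follows. For (ii) I would pass to the nonzero quotient $R/I$ and invoke the standard existence of a prime ideal in a nonzero commutative ring, pulling it back to a prime of $R$ containing $I$; this is the same existence principle already used in the proof of Proposition \ref{NP}.

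Parts (iii)--(v) follow quickly from (i) and BLR-2. For (iii), incomparability gives $P\not\subseteq Q$ and $Q\not\subseteq P$, so (i) yields $P\to Q=Q$ and $Q\to P=P$; plugging into BLR-2 gives $R=(P\to Q)+(Q\to P)=P+Q$. Part (iv) is then immediate, as distinct minimal primes are incomparable. For (v) I may assume $I\neq R$ (the claim is otherwise vacuous); if two primes $P,Q\subseteq I$ were incomparable, (iii) would give $P+Q=R\subseteq I$, contradicting $I\neq R$, so the primes below $I$ are pairwise comparable, i.e.\ form a chain.

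For (vi), every prime of a local ring lies below the maximal ideal $\mathfrak{m}\neq R$, so the primes form a chain by (v); and since $R$ is a local multiplication ring with identity, $R=R_{\mathfrak{m}}$ is itself a dvr or an SPIR by the structure result established earlier (each localization of a multiplication ring with unity is a dvr or an SPIR), in both of which every ideal is a power of a prime. For (vii), (iii) gives $P+Q=R$, so the Chinese Remainder Theorem yields $R/(P\cap Q)\cong R/P\oplus R/Q$ (a finite direct sum of BL-rings, hence a BL-ring by Proposition \ref{closed}). The natural map $A(R)\to A(R/(P\cap Q))$, $J\mapsto (J+(P\cap Q))/(P\cap Q)$, induced by the quotient $R\to R/(P\cap Q)$, is a surjective BL-homomorphism (of the same type as the components of the embedding $\Theta$ in the Representation Theorem, surjectivity being the ideal correspondence); composing with the BL-isomorphism induced by the CRT gives the required epimorphism onto $A(R/P\oplus R/Q)$.

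The heart of the proposition is (viii). Writing $\mathrm{Min}(R)=\{P_1,\dots,P_n\}$, part (iv) makes these pairwise comaximal, so the nilradical is $N=\bigcap_i P_i=\prod_i P_i$ and CRT gives $R/N\cong\prod_{i=1}^n R/P_i$, a finite product of multiplication domains, each classically a Dedekind domain. Since $N$ is nil, a complete orthogonal system of idempotents of $R/N$ lifts to $R$, giving a decomposition $R=\bigoplus_{i=1}^n R_i$ with each $R_i$ a multiplication ring whose reduction modulo its nilradical is $R/P_i$; in particular each $R_i$ has a unique minimal prime. It then suffices to show that a multiplication ring $S$ with a unique minimal prime $\mathfrak{p}=\mathrm{nil}(S)$ is a Dedekind domain (if $\mathfrak{p}=0$) or an SPIR (if $\mathfrak{p}\neq 0$), after which reassembling the $R_i$ gives the claim. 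I expect the main obstacle to be exactly this last classification in the non-reduced case, together with the idempotent lifting: for the lifting I would use that idempotents, and complete orthogonal systems of them, lift modulo the nil ideal $N$, the identity for the direct sum coming from the identity of $R/N$; for the classification I would argue locally, using that each $S_Q$ is a dvr or an SPIR and that a local multiplication ring is a chain ring, to force $S$ (when non-reduced) to be local with $\mathfrak{p}$ maximal and nilpotent. Alternatively one can bypass the hands-on classification by citing that a multiplication ring with finitely many minimal primes is a general ZPI-ring, whose decomposition into Dedekind domains and SPIRs is the structure theorem already recalled in Remark \ref{obs}.
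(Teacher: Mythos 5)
Your proposal tracks the paper's proof closely on parts (iii)--(v) and (vii): the paper likewise derives comaximality from BLR-2 combined with (i), and obtains the epimorphism in (vii) from the Chinese Remainder Theorem after noting $P+Q=R$. Where you diverge, your routes are sound and sometimes cleaner. For (i) you give a purely element-wise argument valid in any commutative ring, whereas the paper applies BLR-1 to write $I\cap P=I\cdot(I\to P)\subseteq P$ and then uses primeness on this product of ideals. For (vi) you identify the local ring with its own localization and invoke the dvr/SPIR dichotomy, whereas the paper argues that the radical of any ideal is prime (because the primes form a chain) and then cites Mott's theorem that in a multiplication ring such an ideal is a power of a prime. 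For (viii) the paper simply cites Mott's Theorem 11, which is exactly your ``alternative'' route; your hands-on sketch via idempotent lifting is not needed, and, as you concede, it leaves the classification of non-reduced multiplication rings with a unique minimal prime unfinished, so you should fall back on the citation.

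The one genuine soft spot is (ii). ``Every nonzero commutative ring has a prime ideal'' is false without unity (a nonzero ring with zero multiplication has none), and BL-rings are only guaranteed to be generated by idempotents, not unital --- which is the whole reason (ii) is stated as a proposition rather than dismissed as Krull's theorem. Your appeal to Proposition \ref{NP} is also somewhat circular, since the unproved step there (``there exists a prime ideal $P$ with $(xR)^\ast\subseteq P$'') is precisely an instance of (ii). The paper supplies the missing argument: given $a\notin I$, pick an idempotent $e$ with $ae=a$ (so $e\notin I$), take $P$ maximal among ideals containing $I$ and omitting $e$ (Zorn), and check that $P$ is prime because $x,y\notin P$ forces $e\in P+RxR$ and $e\in P+RyR$, hence $e\in P+xRyR$. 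You need this idempotent-based Krull argument (applied to $R$, or to $R/I$, which is again generated by idempotents); the ``standard'' existence principle you invoke does not cover the non-unital case.
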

\begin{proof}
\begin{enumerate}
\item[(i)] Let $I$ be an ideal of $R$ and $P$ a prime ideal of $R$. We have from BL-1 $I\cap P=I(I\rightarrow P)$. So $I(I\rightarrow P)\subseteq P$. Thus $I\subseteq P$ or $I\rightarrow P\subseteq P$. Clearly $P\subseteq I\rightarrow P$ and the result follows. 
\item[(ii)] This holds since every BL-ring is generated by idempotents (Corollary \ref{idempotent} 1.). In fact, Let $I$ be a proper ideal $R$ and let $a\notin I$. There is an $e\in R$ with $e^2=e$ such that $a=ae$. It is clear that $e\notin I$ since $a\notin I$. Let $P$ be a maximal ideal with respect containing $I$ but not containing $e$. Suppose now that $xRy\subseteq P$ where $x,y\notin P$. then $e\in P+RxR$ and  $e\in P+RyR$ (as if $e$ is not in the ideal it would contradict the maximality of $P$). It follows that $e\in P+xRyR\subseteq P$, which is a contradiction. Thus $P$ is prime. 
\item[(iii)] This follows from the combination of BL-2 and $(i)$, since $R=(P\rightarrow Q)+(Q\rightarrow P)=P+Q=R$.
\item[(iv)] This follows from the fact that distinct minimal primes are not comparable and the use of $(iii)$.
\item[(v)] Suppose $P,Q\subseteq I$ with $P$ and $Q$ prime ideals. If $P$ and $Q$ are not comparable, we have $R=P+Q\subseteq R$ which contradicts the fact that $I$ is proper. Hence the prime ideals below $I$, if any, form a chain.
\item[(vi)] Suppose $R$ is a local ring. We have a unique maximal ideal that contains all the prime ideals of $R$. Thus the prime ideals of $R$ form a chain by $(v)$. Now the radical of an ideal $I$ of $R$ is the intersection of all prime ideals containing $I$. Since the prime ideals form a chain, the intersection of a chain of prime ideals is a prime ideal and it then follows that the radical of $I$ is a prime ideal. Since $R$ is a multiplication ring, it follows that $I$ is a power of a prime ideal \cite[Theorem 5.]{Mott} 
\item[(vii)] Suppose $P,Q\subseteq R$ are prime ideals that are not comparable. We know that $R/P$ and $R/Q$ are BL-algebras (so $R/p\oplus R/Q$ is also a BL-algebra) and $P+Q=R$. Also, $R^2+P=R^2+Q=R$. By the Chinese Remainder Theorem the natural map $R\rightarrow R/p\oplus R/Q$ is onto. Thus the natural map induces naturally a BL-algebra epimorphism $A(R)\rightarrow A(R/p\oplus R/Q)$. 
\item[(viii)] This holds since $R$ is a multiplication ring and \cite[Theorem 11.]{Mott}. 
\end{enumerate}
\end{proof}

\textbf{Acknowledgments:} Thanks are due to Professor Bruce Olberding for the fruitful discussion we had with him over multiplication, Baer, and Von Neumann rings.

\end{document}